\providecommand{\customgenericname}{}
\newcommand{\newcustomtheorem}[2]{%
  \newenvironment{#1}[1]
  {%
   \renewcommand\customgenericname{#2}%
   \renewcommand\theinnercustomgeneric{##1}%
   \innercustomgeneric
  }
  {\endinnercustomgeneric}
}
\newtheorem{theorem}{Theorem}[section]
\newtheorem{rema}[theorem]{Remark}
\theoremstyle{definition}
\newtheorem*{theo*}{Theorem}
\newtheorem*{rem*}{Remark}
\newtheorem*{corollary*}{Corollary}
\def\cX{X}
\newcommand{\s}{\vspace{0.2cm}}
\newcommand{\we}{Weierstrass }
\begin{document}
\title[Riemann surfaces with transitive action on Weierstrass points]{Infinitely many Riemann surfaces with a\\ transitive action on the Weierstrass points}

\author{Sebasti\'an Reyes-Carocca}
\address{Departamento de Matem\'aticas, Facultad de Ciencias, Universidad de Chile, Las Palmeras 3425, \~{N}u\~{n}oa, Santiago, Chile.}
\email{sebastianreyes.c@uchile.cl}
\author{Pietro Speziali}
\address{Instituto de Matem\'atica, Estat\'istica e Computa\c{c}\~ao Cient\'ifica, Universidade Estadual de Campinas, Campinas, SP 13083-859, Brazil.}
\email{speziali@unicamp.br}

\thanks{Partially supported by Fondecyt Regular grants 1220099 and 1190991}
\keywords{Riemann surfaces, \we points, automorphisms}
\subjclass[2010]{14H55, 14H37, 30F35}

\begin{abstract} In this short note, we prove the existence of infinitely many pairwise non-isomorphic, non-hyperelliptic Riemann surfaces with automorphism group acting transitively on the \we points. We also find all compact Riemann surfaces  with automorphism group acting transitively on the \we points, under the assumption that they are simple.

\end{abstract}
\maketitle
\thispagestyle{empty}
\section{Introduction and statement of the result} 

Weierstrass point theory is one of the main tools in the study of the geometry of compact Riemann surfaces. Since its inception in Hurwitz's foundational paper \cite{Hu}, this theory has tremendously evolved due to tools from different areas, such as projective geometry, complex analysis and numerical semigroup theory.

\s

In this note, we address the general problem of deciding whether the automorphism group of a given Riemann surface acts transitively on the set of its \we points. 

\s

Although this problem is not new, the literature shows very few examples of Riemann surfaces with this property. This fact accounts for the difficulty of the issue of locating the \we points, even in low genera.  Interestingly, all the known examples  are very symmetric and have played an important role in the theory of Riemann surfaces for several reasons. 

\s

Here we mention some remarkable examples.

 \begin{enumerate}
 \item The Accola-Maclachlan curve of genus $g \geqslant 2$ (hyperelliptic) has automorphism group acting transitively on the \we points. 
 
 \s

\item Fermat's quartic is the unique curve among the Fermat's curves with automorphism group acting transitively on the Weierstrass points.
 
 \s
 
 \item Klein's quartic, Macbeath's curve and Bring's curve have automorphism group acting transitively on the \we points. The first two are Hurwitz curves, whereas the latter is the most symmetric among genus four curves.
 
 \s
 
 \item There is no Hurwitz curve of genus $g>14$ with automorphism group acting transitively on the \we points.
 
\end{enumerate}We refer to \cite{KMV},  \cite{laings} and  \cite{mv} for these and other examples. See also \cite{SW}.

\s

Up to the authors' knowledge,  the Accola-Maclachlan curves are the only examples of  infinitely many  Riemann surfaces with transitive action on the \we points. This  was pointed out by Laing and Singerman in \cite{laings}. Concretely, in the conclusion of that paper, the authors mentioned that ``it would be interesting to search for other non-hyperelliptic Riemann surfaces whose automorphism group acts transitively on the Weierstrass points."

\s

This short note is devoted to proving that the non-hyperelliptic Riemann surfaces of genus three with automorphism group acting transitively on the \we points form a complex one-dimensional family.

\s

As a consequence, we deduce our main theorem.

\begin{theo*} 
There are infinitely many pairwise non-isomorphic, non-hyperelliptic compact Riemann surfaces with automorphism group acting transitively on the \we points.
\end{theo*}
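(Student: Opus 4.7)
The plan is to restrict attention to non-hyperelliptic compact Riemann surfaces of genus three. Under the canonical embedding, each such surface is realized as a smooth plane quartic $C \subset \mathbb{P}^2$, and its Weierstrass points coincide with the flexes of $C$. The total flex weight is $g(g-1)(g+1) = 24$; generically there are $24$ distinct simple flexes, in which case transitivity of $\Aut(C)$ on the Weierstrass points is equivalent to transitivity on a set of $24$ points of $\mathbb{P}^2$.

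To exhibit the one-parameter family, I would consider the Ciani pencil
\[
C_\lambda : x^4 + y^4 + z^4 + \lambda\bigl(x^2y^2 + y^2z^2 + z^2x^2\bigr) = 0, \qquad \lambda\in\mathbb{C},
\]
smooth for $\lambda$ outside a finite set of bad values. It is invariant under the symmetric group $S_3$ of coordinate permutations and under the group of diagonal sign changes modulo global scaling; these together generate a subgroup $G \cong S_4$ of $\Aut(C_\lambda)$ of order $24$. Smoothness and the presence of the subgroup $G$ are immediate from the defining equation.

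The crucial step is to show that $G$ acts transitively on the $24$ flexes of $C_\lambda$ for generic $\lambda$. Since $|G|=24$, it suffices to produce a flex whose $G$-stabilizer is trivial: its orbit will then have size $24$ and exhaust the flex set. The approach is to compute the Hessian sextic of $C_\lambda$, intersect it with $C_\lambda$ to identify the flex set explicitly, and verify for one admissible value of $\lambda$ that some flex lies off every line and fixed point of a non-trivial element of $G$. Upper semi-continuity of stabilizers in the flat family then yields the same conclusion for $\lambda$ in a Zariski-dense open subset. The main obstacle is precisely this orbit analysis: while the Hessian intersection is routine in principle, ruling out incidences between flexes and the fixed loci of non-trivial elements of $G$ requires a careful case-by-case verification, and one must also rule out the appearance of hyperflexes for generic $\lambda$ so that all $24$ Weierstrass points are distinct and of weight one.

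To conclude, the image of the map $\lambda \mapsto [C_\lambda]$ in the moduli space $\mathcal{M}_3$ is a one-dimensional algebraic subvariety, since the Ciani curves form an irreducible pencil not contained in a single $\PGL_3(\mathbb{C})$-orbit. Each fibre of this map is finite, because $\Aut(C_\lambda)$ and the set of projective transformations between two Ciani quartics are both finite. Hence infinitely many values of $\lambda$ produce pairwise non-isomorphic non-hyperelliptic Riemann surfaces carrying a group of automorphisms that acts transitively on the Weierstrass points, which establishes the theorem.
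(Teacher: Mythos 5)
You have chosen the same family as the paper (the pencil $\mathscr{C}_\lambda$, denoted $\mathscr{K}_3$ there) and the same overall strategy, but the central step of your argument --- that $\mathbf{S}_4$ acts transitively on the $24$ flexes for generic $\lambda$ --- is not actually proved: it is deferred to an explicit Hessian computation (``compute the Hessian sextic, intersect with $C_\lambda$, and verify for one admissible value of $\lambda$ that some flex avoids every fixed locus'') which you describe but do not carry out, and which you yourself flag as the main obstacle. As written, the proposal is a correct plan with a hole exactly where the mathematical content lies. The paper closes this hole without any Hessian computation: it first shows, using the fixed-point-free order-$4$ automorphism $[x:y:z]\mapsto[-y:x:z]$, that the signature of the $\mathbf{S}_4$-action is $(0;2,2,2,3)$, so every point stabiliser has order $1$, $2$ or $3$; it rules out \we points with stabiliser of order $3$ by a weight count against Kato's bound (each weight is $1$ or $2$ in genus three, total weight $24$); and it handles stabilisers of order $2$ via Towse's parity theorem for fixed points of involutions combined with Macbeath's formula and the orbit structure of the two conjugacy classes of involutions of $\mathbf{S}_4$, concluding that a \we point fixed by an involution forces a single orbit of $12$ double \we points. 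That conceptual route proves transitivity for \emph{every} member of the family, including $\mathscr{C}_0$ (Fermat) and $\mathscr{C}_3$, where the \we points are $12$ hyperflexes --- cases your generic argument would miss entirely. This omission is harmless for the main theorem as stated, but it is what the paper needs for its Theorems 1 and 2 (the full characterisation and the classification with simple \we points).

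Two smaller points. First, your reduction ``a flex with trivial stabiliser has an orbit of size $24$, hence exhausts the flex set'' is fine, but only under the hypothesis that there are $24$ distinct flexes, which is itself part of what must be verified; the paper's counting argument sidesteps this. Second, your non-isomorphism step (finite fibres of $\lambda\mapsto[C_\lambda]$ because the pencil is not contained in a single $\PGL_3(\mathbb{C})$-orbit) is essentially right but loosely argued; the paper simply cites Kuribayashi--Sekita, who prove $\mathscr{C}_t\cong\mathscr{C}_{t'}$ if and only if $t=t'$ away from the Klein parameter, which yields the ``infinitely many'' conclusion immediately.
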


The theorem above is a rather surprising result, as it contradicts the expectation of the non-existence of infinitely many non-hyperelliptic Riemann surfaces with a transitive action on the Weierstrass points. Such an expectation is clearly expressed in \cite{mv} and \cite{SC}. More precisely, in the introduction of \cite{mv}, Magaard and V\"{o}lklein state that ``among hyperelliptic curves there are infinitely many examples of curves with automorphism group acting transitively on the Weierstrass points. We expect no other
infinite family of curves with this property to exist". Later
Singerman in \cite{SC} states the aforementioned expectation as a conjecture (indeed, Singerman makes three conjectures  that this paper falsifies; see \cite[Section 5]{SC}).

\s

As another consequence of our result, we find all the compact Riemann surfaces of genus $g >2$ with automorphism group acting transitively on the \we points and having all such points weight 1.

\s

At the end of this paper, we shall discuss some future directions and open questions.

\section{A family of Riemann surfaces} We recall that the moduli space   $\mathscr{M}_3$ of compact Riemann surfaces of genus $g=3$ is a complex analytic space of dimension six and that its singular locus agrees with the points representing surfaces admitting non-trivial automorphisms, with the exception of the hyperelliptic surfaces having automorphism group of order two; see \cite{Oort13}. Let  $$\mathscr{K}_3 \subset \mbox{Sing}(\mathscr{M}_3)$$be the locus formed by points represented by the smooth  projective algebraic curves $$\mathscr{C}_t : x^4 +y^4 +z^4+ t(x^2y^2 +y^2z^2 +z^2x^2) = 0$$ where $t$ is a complex number different from $-1$ and  $\pm 2.$   The members of $\mathscr{K}_3$ are non-hyperelliptic and have a group of automorphisms isomorphic to the symmetric group $\mathbf{S}_4$ of order 24, generated by the transformations $$[x:y:z]\mapsto [x:z:-y] \,\, \mbox{ and } \,\, [x:y:z]\mapsto [z:y:x].$$

This group agrees with the (full) automorphism group of  each member of $\mathscr{K}_3$ with only two exceptions:   Fermat's quartic and Klein's quartic $$x^4 + y^4 + z^4 =0 \,\, \mbox{ and } \,\,x^3y+y^3z+z^3x=0,$$which are obtained for $t=0$ and $t=t_0:=\tfrac{3}{2}(-1\pm 7i)$ respectively.  Kuribayashi and  Sekita  proved  in \cite{ks} that if $t$ and $t'$ are different from $t_0$  then  $$\mathscr{C}_{t} \cong \mathscr{C}_{t'} \,\, \iff \,\, t=t'.$$These curves were considered by Kuribayashi and Komiya in \cite{KK79} and  are known as {\it Kuribayashi-Komiya quartics}. We refer to  \cite{KFT} for further properties of this family, also known as  {\it KFT pencil}. See also \cite{bars}, \cite{dolga} and \cite{ik}.

\section{Proof of the main theorem}

 The proof of the main theorem will be a consequence of the following four facts.

\s

\begin{customfact}{1} Let $\cX$ be a non-hyperelliptic compact Riemann surface of genus $g=3$. If the  automorphism group of $\cX$ acts transitively on the Weierstrass points then $\cX \in \mathscr{K}_3.$
\end{customfact}

\begin{proof}
Assume that the automorphism group $G$ of $X$ acts transitively on the Weierstrass points. By \cite[Theorem 1]{kato}, the weight of each of these points is either equal to 1 or 2. Since the sum of the weights of all the points of $X$ is equal to 24, the Riemann surface $X$ has either 12 or 24 \we points. In particular, the order of $G$ must satisfy $$|G|=12|G_P| \,\, \mbox{ or } \,\, |G|=24|G_P|$$ where $G_P$ stands for the stabiliser subgroup of a \we point $P$ of $X.$ It follows that the order of $G$ must be divisible by 12 and consequently, by \cite[Theorem 16]{bars}, the Riemann surface either belongs to $\mathscr{K}_3$ or is isomorphic to the Picard curve $$\mathscr{Q} : x^4 +y^4 +z^3x= 0.$$

This latter curve is the unique non-hyperelliptic Riemann surface of genus $g=3$ with exactly 48 automorphisms (see, for instance, \cite{CP} and \cite{Shaska}).

\s
 To prove the fact it suffices to verify that $X$ is not isomorphic to $\mathscr{Q}.$ Let  $$Q = [0:0:1] \in \mathscr{Q}$$ and denote by $\mathbb{C}(\bf{x},\bf{y})$  the function field of $\mathscr{Q}$. A straightforward  computation shows that $1/{\bf x}$ has a pole of order $4$ at $Q$ while it is regular elsewhere, and that ${\bf y}/{\bf x}$ has a pole of order $3$ at $Q$ while it is regular elsewhere. In other words,  $Q$ is a \we point of $\mathscr{Q}$, its gap sequence is $\{1,2,5\}$ and its weight is 2.  Now, the transitivity implies that all the \we points of $\mathscr{Q}$ have weight equal to $2$, which is the maximal possible weight in genus three. However, following \cite[Theorem 1.2]{keem}, there are exactly two Riemann surfaces of genus three on which all \we points are of maximal weight; these surfaces being $\mathscr{C}_0$ and $\mathscr{C}_3.$ Clearly, none of them is isomorphic to $\mathscr{Q}$ and the proof of the fact is done. 
\end{proof}
 
Although the following fact is known (see, for instance, \cite{FS}), we include a proof for the sake of completeness. 
 
  \begin{customfact}{2} \label{f2}
  Assume that $X$ is isomorphic to $\mathscr{C}_t$ with $t \neq 0, -1, \pm 2, t_0$ 
and let $G$ denote its automorphism group. Then the signature of the action of  $G$ on $X$ is $(0;2,2,2,3)$. In other words, the quotient $X/G$ is rational, the associated regular covering map $$X \to X/G \cong \mathbb{P}^1$$ramifies over exactly four values, and the corresponding fibers consist of ramification points with stabilisers of orders $2,2,2$ and $3$ respectively.
 \end{customfact}

\begin{proof} 
We recall that the signature of the action of a group isomorphic to $\mathbf{S}_4$ on a compact Riemann surface of genus $g=3$ is either $(0;2,2,2,3)$ or $(0; 3,4,4)$. 
Consider the automorphism of $\mathbb{P}^2$ given by
$$
\Phi([x:y:z]) = [-y:x:z]
$$and observe that $\Phi$ has only two fixed points$$P_1=[-i:1:0] \,\,\, \mbox{ and } \,\,\, P_2=[1:-i:0].$$
Note that $\Phi$ restricts to an automorphism of order 4 of $\mathscr{C}_t$ for each $t$. A straightforward computation shows that $P_1, P_2$ belong to $\mathscr{C}_t$ if and only if $t=2$, and therefore $$G \geqslant \langle \Phi \rangle \cong C_4$$is a group of automorphisms of $X$ acting without fixed points. This, coupled with the fact that ${\bf S}_4$ possesses a single conjugacy class of cyclic subgroups of order $4$, shows that there is no point of $X$ with $G$-stabiliser of order divisible by $4$. Hence, the signature cannot be $(0;3,4,4)$ and the claim follows.
\end{proof}

\begin{customfact}{3} \label{f3}
If $X$ is isomorphic to $\mathscr{C}_t$ with $t \neq 0, -1, \pm 2, t_0$ 
and there is a \we point of $X$ fixed by an involution of $X$ then the action of the automorphism group $G$ of $X$ is transitive on the \we points. \end{customfact}

\begin{proof}
 We recall that the group $\mathbf{S}_4$ has two conjugacy classes of cyclic subgroups of order two (a standard reference for permutation groups is \cite{Cameron}; a nice and concise list of properties of $\mathbf{S}_4$, including presentations, can be also found at \cite{database}). In terms of the presentation$$G \cong \mathbf{S}_4= \langle a,b,c,d : a^2, b^2,(ab)^2, c^3, cac^{-1}ab, cbc^{-1}a, d^2, dadab, [d,b], (dc)^2 \rangle,$$ these conjugacy classes are represented by $\langle d \rangle$ (of length 6) and by $\langle b \rangle$  (of length 3).  A computation shows that the action of $G$ on $X$ is topologically equivalent to the one represented by the generating vector $$(b,db,dc,c^{-1})$$(see \cite{Brou} for a precise definition of generating vector) and consequently, each involution $\iota$ of $G$ acts with fixed points. More precisely,  if $\iota$ is conjugate to $d$ then $\iota$ has 4  fixed points, and such points belong to two distinct orbits. By contrast,  if $\iota$ is conjugate to $b$ then $\iota$ has 4  fixed points, and such points belong to the same orbit (see Macbeath's formula \cite{mcB}).

\s

Now, assume that there exists a \we point $P$ fixed by an involution. The weight of points fixed by involutions was investigated by Towse. In fact, following his main results in \cite{towse}, the expected weight of a point fixed by an involution fixing $4$ points is equal to $0$, that is, these points are not \we points unless the image points in the quotient curve are \we points, either for the canonical or another linear series. In addition,  the parity of the actual weight  of such a point must agree with the parity of the expected weight. It follows that  the weight of $P$ is even; hence it is equal to $2$. If $P$ is fixed by an involution conjugate to $d$ then $X$ has  24 \we points of weight $2$; a contradiction. Thus, one sees that $P$ must be fixed by an involution conjugate to $b$. Thereby, the orbit of $P$ consists of 12 \we points of weight $2$, and the action is transitive, as claimed.
\end{proof}

 \begin{customfact}{4} \label{f4}
If $X \in \mathscr{K}_3$  then the automorphism group $G$ of $X$ acts transitively on the Weierstrass points.
\end{customfact}

\begin{proof} Since Klein's quartic and Fermat's quartic are known to have a transitive action on the \we points \cite{laings}, we only need to consider the remaining cases, namely, when $X$ is isomorphic to $\mathscr{C}_t$ where $t \neq 0, -1, \pm2, t_0.$ As proved in Fact \ref{f2}, in such a case the $G$-stabiliser subgroup of each point of $X$ is trivial, of order two or of order three.
\s

 Assume that there exists a \we point $P$ with trivial stabiliser. Then each point in the $G$-orbit of $P$ is a \we point. In particular, we have $24$ \we points of weight $1$, and the action is transitive. 
 
 \s
 
Now, suppose  that there exists a \we point $P$ fixed by an automorphism of order three. In such a case, $X$ has 8 \we points. It follows from the fact that $X$ is non-hyperelliptic that there exists a \we point $R$ which does not belong to the $G$-orbit of $P$. If the stabiliser of $R$ were trivial, then $X$ would have 32 \we points; a contradiction. It then follows from the preceding discussion that $R$ must be fixed by an involution conjugate to $b$ and therefore  the surface possesses, in addition to $P$ and its whole $G$-orbit, 12 \we points of weight 2. This is impossible and therefore there is no \we point among the fixed points of automorphisms of order three. 

\s

Finally, in the remaining case (namely, if there is a \we point fixed by an involution) the transitivity has been already proved. The proof of the fact then follows.
 \end{proof}

The previous four facts can be summarised in the following theorem.

\begin{customthm}{1}
 A non-hyperelliptic compact Riemann surface $X$ of genus three has automorphism group acting transitively on the \we points if and only if $X$ belongs to the family $\mathscr{K}_3.$
\end{customthm}

The proof of the main theorem follows directly from the result above.

\section{Simple \we points}

Laing and Singerman in \cite{laings} searched for all Riemann surfaces of genus $g >2$ admitting a transitive action on the \we points, but under the additional assumption that such  points are simple (namely, of weight 1). They succeeded in giving a partial answer to this problem by taking advantage of the geometric richness in the case of platonic Riemann surfaces, leaving the case of $\mathbf{S}_4$ acting on genus $g=3$ with signature $(0; 2,2,2,3)$ as an open problem. In fact, referring to that situation, they pointed out that  ``it is unclear whether there could be any [Riemann surface in the family $\mathscr{K}_3$] where the group does not fix any Weierstrass points or whether the weight of the Weierstrass points is equal to one". 

\s

Observe that the proof of Fact 4 shows that there are only two scenarios for the distribution of the \we points of $X \in \mathscr{K}_3,$ namely:
\begin{enumerate}
\item the \we points form a long orbit and they are simple, or
\item  the \we points form a short orbit of length 12 and they are double.
\end{enumerate}

Following the results of \cite{keem}, the latter case occurs if and only if $X$ is Fermat's quartic or $X$ is isomorphic to $$\mathscr{C}_3 : x^4+y^4+z^4+3(x^2y^2+y^2z^2+z^2x^2)=0.$$

\s
All the above coupled with \cite[Theorem 15]{laings} is the proof of the following result.

\begin{customthm}{2}
There is a transitive action on the \we points on a compact Riemann surface $\cX$ of genus $g > 2$ and the \we points are simple if and only if one of the following statements holds.
\begin{enumerate} 
\item $g=4$ and $\cX$ is isomorphic to Bring's curve. 
\item $g=3$ and $\cX \in \mathscr{K}_3$  is isomorphic $\mathscr{C}_t$ for some $t$ different from  $0$ and $3.$
\end{enumerate}
\end{customthm}

The theorem above answers a question posed by Laing and Singerman and completes \cite[Theorem 15]{laings}. 

\s

For the sake of completeness, we recall that the \we points of Bring's curve were found by Edge in \cite{edge}; see also \cite[Section 3.1]{bra}.

\begin{rema}
The \we points of  Fermat's quartic were obtained by Hasse in \cite{hasse}. Besides, the \we points of $\mathscr{C}_3$ can be easily computed, since they are the fixed points of the involutions conjugate to $[x:y:z] \mapsto [-x:y:z]$. Explicitly, the \we points of $\mathscr{C}_3$ are   $$[0:1:\zeta], [1: \zeta: 0], [\zeta:0:1] \,\, \mbox{ where }\,\, \zeta = \pm \sqrt{\tfrac{-3\pm\sqrt{5}}{2}}.$$As simple Weierstrass points  are precisely ordinary flexes,
 the \we points of $\mathscr{C}_t$ for  $t \neq 0, -1, \pm 2, 3, t_0$ are the points  of $\mathscr{C}_t$ at which the Hessian determinant vanishes.
\end{rema}

\section{Future directions and final comments}

We end this note with some remarks.

\subsection*{The action on $\mathscr{C}_3$} It is worth recalling that the family $\mathscr{K}_3$ is equisymmetric, namely, it is a closed irreducible algebraic subvariety of $\mathscr{M}_3$ and the action of $\mathbf{S}_4$ on its members is the same, in the sense of topological actions (see \cite{b22} for a precise definition of topological action). Then it naturally arises the problem of understanding the dichotomy between $\mathscr{C}_3$ (with double \we points) and $\mathscr{C}_t$ with $t \neq 0,3$ (with simple \we points), and if such a dichotomy can be read off from the geometry of the action.

\subsection*{The Jacobian variety} Let $j: X \hookrightarrow JX$ denote the Abel-Jacobi map of $X$ and let $\mathscr{W}_X$ be the group generated by the images under $j$ of Weierstrass points of $X$. If $X$ is hyperelliptic then it is well-known that $\mathscr{W}_X$ agrees with the $2$-torsion points of $JX$. By contrast, for non-hyperelliptic surfaces of genus 3 the structure of $\mathscr{W}_X$ have been determined only for some special cases: for instance, Klein's quartic in \cite{ref6}, Fermat's quartic in \cite{ref7} and the Picard curve  
in \cite{ref2}. It would be interesting to understand the extent to which the transitivity on the \we points may help in the problem of determining the algebraic structure of $\mathscr{W}_X$.  See also  \cite{girard} and \cite{kamel2}.

\subsection*{New examples} Since all the known examples of non-hyperelliptic Riemann surfaces with transitive action on the \we points turn out to have  genus at most seven, it would be interesting to search for new examples (and infinite collection of examples) in higher genera, if they exist. If such Riemann surfaces do not exist for arbitrary genus, it would be interesting to find an upper bound for their genera.

\subsection*{The case of positive characteristic} Transitive actions on \we points can be also studied for (projective, non-singular, absolutely irreducible) algebraic curves  defined over algebraically closed fields of positive characteristic. However, in this context the relationship between \we point and automorphisms becomes much more involved than in the complex case. In fact, the following two problems may arise:
\begin{enumerate}
\item the curve might be non-classical, that is, the gap sequence at a generic point can   be different from $\{1,\ldots,g\}$, where $g$ is the genus of the curve, and
\item the order of the automorphism group might exceed the Hurwitz bound $84(g-1)$. 

\end{enumerate}

For instance, over an algebraically closed field of characteristic $3$, Fermat's quartic and Klein's quartic are isomorphic, with a plane model given by $y^3-y = x^4$. Indeed, following \cite{elkies} and \cite{kanameko}, this curve is both non-classical (with $\{1,2,4\}$ as gap sequence at a generic point) and has automorphism group  isomorphic to $\mbox{PGU}(3,3)$ of order $6048$.  We refer  to \cite{VM} for several interesting results on the subject.

\s

Despite the above, if the characteristic of the ground field is big enough in comparison with the genus then the curve turns out to be classical and the Hurwitz bound holds true. Summarising, the following result is obtained.

\begin{corollary*} For every prime number $p \geqslant 5$, 
there are infinitely many pairwise non-isomorphic non-hyperelliptic projective, non-singular, absolutely irreducible algebraic curves defined over an algebraically closed field of characteristic  $p$ with automorphism group acting transitively on the \we points.
\end{corollary*}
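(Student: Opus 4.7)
The plan is to run the proof of Theorem 1 in positive characteristic, keeping the same family $\mathscr{C}_t$ and working over an algebraically closed field $k$ of characteristic $p\ge 5$. The hypothesis $p\ge 5$ has two consequences that unlock the characteristic-zero machinery. First, $p\nmid 24$, so every subgroup of $\mathbf{S}_4$ acts on $\mathscr{C}_t$ tamely and Riemann--Hurwitz takes its familiar form; in particular the Hurwitz bound $|\Aut(\mathscr{C}_t)|\le 168$ holds. Second, since $p$ exceeds both the degree $4$ of the planar (canonical) model and the genus $g=3$, the smooth members of the family are classical in the sense of St\"ohr--Voloch, so the generic gap sequence is $\{1,2,3\}$, the total Weierstrass weight equals $g(g^2-1)=24$, and Kato's bound that individual weights in genus three are at most $2$ remains valid.

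Next, I would check that for all but finitely many $t\in k$ the equation defines a smooth plane quartic on which the two $\PGL_3(k)$-transformations exhibited in Section 2 generate a subgroup isomorphic to $\mathbf{S}_4$, with signature $(0;2,2,2,3)$. The polynomial-identity computation of Fact \ref{f2}, showing that the only order-four projective transformation in play has its two fixed points on $\mathscr{C}_t$ exactly when $t=2$, is characteristic-free and so produces this signature for $t\notin\{-1,\pm 2\}$. With these inputs, the case analysis of Facts \ref{f3} and \ref{f4} can be replayed verbatim, as it uses only Macbeath's fixed-point formula for tame actions and Towse's parity statement on weights of points fixed by tame involutions, both of which hold for classical curves in tame characteristic. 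The outcome is again a dichotomy: either the $24$ Weierstrass points form a single simple orbit, or they form a single orbit of length $12$ consisting of weight-two points. In either case the action is transitive.

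Finally, to extract an infinite family of pairwise non-isomorphic curves, I would use a Kuribayashi--Sekita-type rigidity argument: any isomorphism $\mathscr{C}_t\cong\mathscr{C}_{t'}$ is induced, via the canonical embedding, by an element of $\PGL_3(k)$ normalising both canonical models; imposing this condition is a finite linear-algebra problem valid whenever $p\ne 2,3$, and it bounds the fibres of $t\mapsto[\mathscr{C}_t]$ in moduli by a uniform constant. Since $k$ is infinite while only finitely many values of $t$ are excluded, this produces the required infinite set of isomorphism classes.

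I expect the main obstacle to be the step of ensuring that for cofinitely many $t$ the full automorphism group of $\mathscr{C}_t$ is no larger than $\mathbf{S}_4$: in characteristic zero this uses the Bars classification of large-automorphism genus-three curves, which does not apply directly in characteristic $p$. The most efficient fix is to combine the tame Hurwitz bound with a positive-characteristic analogue of Henn's classification for tame-characteristic genus-three curves, then verify by hand that each candidate strict enlargement of $\mathbf{S}_4$ is realised for only finitely many values of $t$; this reduction to finitely many special fibres is the only step where the characteristic-zero arguments of Section 3 do not transfer mechanically.
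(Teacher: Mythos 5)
Your plan is essentially the paper's own proof, which is far terser: the authors simply cite Schmidt ($p>2g-2$ implies the curve is classical) and Roquette (non-hyperelliptic and $p>g+1$ implies the Hurwitz bound), note that both hypotheses hold for $g=3$ and $p\geqslant 5$, and declare that the main theorem's argument transfers. Your expanded version correctly isolates the ingredients that must survive reduction modulo $p$: classicality (hence total weight $24$, generic gaps $\{1,2,3\}$, and the weight bound $2$), tameness of the $\mathbf{S}_4$-action, the signature computation, Macbeath- and Towse-type statements for tame actions, and a rigidity argument for pairwise non-isomorphism, which the paper leaves entirely implicit. The one point where you go astray is in identifying the determination of the \emph{full} automorphism group of $\mathscr{C}_t$ (a positive-characteristic analogue of the Bars/Henn classification) as the main obstacle: the corollary only requires the ``if'' direction of Theorem 1, i.e.\ Fact 4, whose orbit-counting argument is carried out entirely inside the subgroup $\mathbf{S}_4\leqslant\Aut(\mathscr{C}_t)$; and if the \we points form a single orbit under a subgroup, they a fortiori form a single orbit under the full automorphism group. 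So no control of possible enlargements of $\mathbf{S}_4$ is needed, and the step you single out as the hardest can simply be omitted.
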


\begin{proof} Let $X$ be a projective, non-singular, absolutely irreducible algebraic curve of genus $g$ defined over an algebraically closed field of characteristic $p$. Following \cite{sch}, if $p > 2g-2$ then $X$ is classical. In addition, as proved in  \cite{Roq}, if $X$ is non-hyperelliptic and $p > g+1$ then the Hurwitz bound  holds for $X$. In particular, if $g=3$ we can take $p \geqslant 5$ and the corollary follows from the main theorem.
\end{proof}

\subsection*{Acknowledgement} The authors are grateful to the referees for their valuable comments and suggestions.

\end{document}